\newsavebox{\wwide}
\newcommand{\wwidehat}[1]{\sbox{\wwide}{$#1$}
\ifdim\wd\wwide < 1.1 em \widehat{#1} \else
\setlength
{\unitlength}{0.01\wd\wwide}\overset
{\begin{picture}(100,6)
\path(0,0)(50,6)(100,0)
\end{picture}}{#1}\fi}
\newcommand{\wwidetilde}[1]{\sbox{\wwide}{$#1$}
\ifdim\wd\wwide < 1.1 em \widetilde{#1} \else
\setlength
{\unitlength}{0.01\wd\wwide}\overset
{\begin{picture}(100,6)
\path(0,0)(33,6)(45,6)(55,0)(67,0)(100,6)
\end{picture}}{#1}\fi}
\definecolor{ChadDarkBlue}{rgb}{.1,0,.2}  
\definecolor{ChadBlue}{rgb}{.1,.1,.5}  
\definecolor{ChadRoyal}{rgb}{.2,.2,.8}  
\definecolor{ChadGreen}{rgb}{0,.4,0}    
\definecolor{ChadRed}{rgb}{.5,0,.5}  
\def\smallskip{\vskip\smallskipamount}
\def\medskip{\vskip\medskipamount}
\def\bigskip{\vskip\bigskipamount}
\numberwithin{equation}{section}
\theoremstyle{plain}
\newtheorem{theorem}{Theorem}[section]
\newtheorem{lemma}[theorem]{Lemma}
\newtheorem{proposition}[theorem]{Proposition}
\newtheorem{corollary}[theorem]{Corollary}
\newtheorem{defn}[theorem]{Definition}
\theoremstyle{definition}
\newtheorem{remark}[theorem]{Remark}
\DeclareTextCommand{\elqq}{T1}{\leavevmode\char16\nobreak\hskip0pt}
\DeclareTextCommand{\erqq}{T1}{{\edef\@SF{\spacefactor\the\spacefactor}%
\nobreak\char17\@SF\relax}}
\newcommand{\cat}{\mathbf}
\newcommand{\ra}{\rightarrow}
\newcommand{\la}{\leftarrow}
\newcommand{\da}{\downarrow\!}
\newcounter{ok}
{\end{list}}
\newcounter{aok}
{\end{list}}
\def\go#1;#2;#3 {\vbox to0pt{\kern-#3\hbox{\kern#2 #1}\vss}\nointerlineskip}
\newcommand{\structA}{\mathcal A}
\newcommand{\map}[3]{#1 \colon #2 \to #3}
\newcommand{\Qsup}{\bigsqcup}
\renewcommand{\sup}{\bigvee}
\mathchardef\mhyphen="2D
\newcommand{\catQSupOAlg}{Q\text{-}\mathbf{Sup}\text{-}\Omega\text{-}\mathbf{Alg}}
\newcommand{\catQModOAlg}{Q\text{-}\mathbf{Mod}\text{-}\Omega\text{-}\mathbf{Alg}}
\newcommand{\catOAlg}{\Omega\text{-}\mathbf{Alg}}
 \newcommand{\N}{{\mathbb{N}}}
 \renewcommand{\le}{\leqslant}
\begin{document}
\title{A representation theorem for quantale valued sup-algebras}
\author{%
        \IEEEauthorblockN{Jan~Paseka}
\IEEEauthorblockA{Department of Mathematics and Statistics\\
		Faculty of Science, Masaryk University\\
		{Kotl\'a\v r{}sk\' a\ 2}, CZ-611~37~Brno, Czech Republic\\
        E-mail: paseka@math.muni.cz}
\and 
\IEEEauthorblockN{Radek~{\v S}lesinger}
\IEEEauthorblockA{Department of Mathematics and Statistics\\
		Faculty of Science, Masaryk University\\
		{Kotl\'a\v r{}sk\' a\ 2}, CZ-611~37~Brno, Czech Republic\\
        E-mail: xslesing@math.muni.cz}}



\markboth{A representation theorem for quantale valued sup-algebras}%
{A representation theorem for quantale valued sup-algebras}
%

%




\IEEEcompsoctitleabstractindextext{%
\begin{abstract}
With this paper we hope to contribute to the theory of quantales and quantale-like structures. It considers
the notion of $Q$-sup-algebra and shows a representation theorem for such
structures generalizing the well-known representation theorems for quantales and sup-algebras. In addition, we present some important properties of the category of 
$Q$-sup-algebras.
\end{abstract}
\begin{IEEEkeywords}  sup-lattice, sup-algebra, quantale, $Q$-module, $Q$-order,  $Q$-sup-lattice, 
 $Q$-sup-algebra.
\end{IEEEkeywords}}
      
\maketitle  

\IEEEdisplaynotcompsoctitleabstractindextext

%

{\section*{Introduction}}

\label{intro}
\IEEEPARstart{T}{wo} equivalent structures, quantale modules \cite{abramsky-vickers} and $Q$-sup-lattices \cite{zhang-xie-fan} were independently introduced and studied. Stubbe \cite{stubbe-tcc} constructed an isomorphism between the categories
of right $Q$-modules and cocomplete skeletal $Q$-categories for a given
unital quantale $Q$. Employing his results,  Solovyov \cite{solovyov-qa} obtained 
an isomorphism between the categories of $Q$-algebras and $Q$-valued quantales, 
where $Q$ is additionally assumed to be commutative. 

Resende introduced (many-sorted) sup-algebras that are certain partially 
ordered algebraic structures which generalize quantales, frames and biframes (pointless
topologies) as well as various lattices of multiplicative 
ideals from ring theory and functional analysis
(C*-algebras, von Neumann algebras).  One-sorted case was studied e.g. by 
Zhang and Laan \cite{zhang-laan},  Paseka \cite{paseka}, and, in the generalized form 
of $Q$-sup-algebras by {\v S}lesinger \cite{slesinger2, slesinger}.

The paper is organized as follows. First we present 
several necessary algebraic concepts as sup-lattice, sup-algebra, quantale and quantale module.
Using quantales as a base structure for valuating fuzzy concepts, we recall the notion of a $Q$-order -- a fuzzified
variant of partial order relations, and a $Q$-sup-lattice for a fixed unital commutative quantale $Q$.




We then recall 
Solovyov's isomorphism between the category of $Q$-sup-lattices and the category of $Q$-modules, 
and between the category of $Q$-sup-algebras and the category of $Q$-module-algebras. 
This isomorphism provides a relation between quantale-valued sup-algebras, 
which are expressed through fuzzy concepts, and quantale module-algebras, which is a notion
expressed in terms of universal algebra.

In Section \ref{sectimpprop} we establish several important properties of the category of $Q$-sup-algebras, e.g., 
an adjoint situation between categories of  $Q$-sup-algebras and $Q$-algebras, and the fact that 
the category of $Q$-sup-algebras is a monadic construct. 

In Section \ref{nucleiprop} we focus on the notion of a $Q$-MA-nucleus for $Q$-module-algebras and state its 
main properties.  In the last section we introduce a $Q$-MA-nucleus on the free Q-sup-algebra and using it, we 
establish our main  theorem for  $Q$-sup-algebras  that 
generalizes the well-known representation theorems for quantales and sup-algebras. 

In this paper, we take for granted the concepts and results on quantales, category theory 
and universal algebra. To obtain more information on these topics, we direct the reader
to  \cite{adamek}, \cite{kruml} and \cite{rosenthal1}. 

\medskip
\section{Basic notions, definitions and results}

\subsection{Sup-lattices, sup-algebras, quantales and quantale modules}

A {\em\bfseries sup-lattice} $A$ is a partially ordered set 
(complete lattice) in which every subset 
$S$ has a join (supremum) $\bigvee S$, and therefore 
also a meet (infimum) $\bigwedge S$. The greatest element 
 is denoted by $\top$, the least element by $\bot$. 
A {\em\bfseries sup-lattice homomorphism} $f$ between sup-lattices 
$A$ and $B$ is a join-preserving mapping from $A$ to $B$, 
i.e., 
$f \left(\sup S \right) = \sup \{f(s) \mid s \in S \}$  for every subset 
$S$  of $A$. The category of sup-lattices will be denoted  $\cat{Sup}$.
Note that a mapping $f\colon{}A\to B$ if a sup-lattice homomorphism if and only 
if it has a {\em\bfseries right adjoint} $g \colon B\to A$, 
by which is meant a mapping $g$ that satisfies
\[
f(a)\le b\iff a\le g(b)
\]
for all $a\in A$ and $b\in B$. We write $f\dashv g$ in order to state 
that $g$ is a right adjoint to $f$ (equivalently, $f$ is a left adjoint to $g$). 

{A \emph{type} is a set $\Omega$ of \emph{function symbols}. To 
each $\omega \in \Omega$, a number $n \in \N_0$ is assigned, which is called 
the \emph{arity} of $\omega$ (and $\omega$ is called an $n$-ary function symbol). 
Then for each $n \in \N_0$, $\Omega_n \subseteq \Omega$ will denote the 
subset of all $n$-ary function symbols from $\Omega$.}

Given a set $\Omega$, an \emph{algebra of type $\Omega$} (shortly, an \emph{$\Omega$-algebra}) 
is a pair $\structA = (A, \Omega)$ where for each $\omega \in \Omega$ with arity $n$, 
there is an $n$-ary operation $\map {f_\omega} {A^n} A$. 

A \emph{\bfseries sup-algebra of type $\Omega$} (shortly, a \emph{sup-algebra}) 
is a triple $\structA = (A,\sup,\Omega)$ where $(A,\sup)$ is a sup-lattice, 
$(A,\Omega)$ is an $\Omega$-algebra, and each operation 
$\omega$ is join-preserving in any component, that is,
$$
\begin{array}{l}\omega \left(a_1,\dots,a_{j-1},\sup B,a_{j+1},\dots,a_n \right) %
 = \\
 \sup \{ \omega(a_1,\dots,a_{j-1},b,a_{j+1},\dots,a_n) \mid b \in B \}
 \end{array}
$$
\noindent{}for any $n \in \N$, $\omega \in \Omega_n$, $j \in \{1,\dots,n\}$, 
$a_1,\dots,a_n \in A$, and $B \subseteq A$. 

A join-preserving mapping $\map \phi A B$ from a sup-algebra $(A,\sup,\Omega)$ 
to a sup-algebra $(B,\sup,\Omega)$ is called a \emph{\bfseries sup-algebra homomorphism} if
$$ \omega_B(\phi(a_1), \dots, \phi(a_n)) = \phi(\omega_A(a_1,\dots,a_n)) $$
\noindent{}for any $n \in \N$, $\omega \in \Omega_n$, and $a_1,\dots,a_n \in A$, and
$$ \omega_B = \phi(\omega_A) $$
\noindent{}for any $\omega \in \Omega_0$. 

Common instances of sup-algebras include the following 
(operation arities that are evident from context are omitted):

\begin{enumerate}

\item sup-lattices with $\Omega = \emptyset$,

\item {\em\bfseries (commutative) quantales} $Q$ \cite{rosenthal1} 
with $\Omega = \{ \cdot \}$ such that $\cdot$ is an associative (and commutative)
binary operation, and  {\em\bfseries unital quantales} $Q$  with $\Omega = \{ \cdot, 1 \}$ such that 
$1$ is the unit of the  associative binary operation $\cdot$,

\item  {\em\bfseries quantale modules} $A$ \cite{rosenthal1} 
with $\Omega = \{ q* \mid q \in Q \}$ such that 
$Q$ is a quantale such that $(\bigvee S)*a=\bigvee_{s\in S} s*a$ and $p*(q*a)=(p\cdot q)*a$ 
for all $S\subseteq Q$, $p, q\in Q$ and $a\in A$.

\end{enumerate}

For any element $q$ of a quantale $Q$, the unary operation $\map {q \cdot -} Q Q$ 
is join-preserving, therefore it has a (meet-preserving) right adjoint $\map {q \ra -} Q Q$, 
characterized by $q \cdot r \leq s \iff r \leq q \ra s$. Written explicitly, $q \ra s = \sup \{ r \in Q \mid q\cdot r \leq s \}$.

Similarly, there is a right adjoint $\map {q \la -} Q Q$ for $- \cdot q$, 
characterized by $r \cdot q \leq s \iff r \leq q \la s$, and satisfying 
$q \la s = \sup \{ r \in Q \mid r\cdot q \leq s \}$. If $Q$ is 
commutative, the operations $\ra$ and $\la$ clearly coincide, and we will keep denoting them $\ra$.

Note that the real unit interval $[0,1]$ with standard partial order and multiplication of reals is a commutative quantale.

\subsection{$Q$-sup-lattices}

By a \emph{base quantale} we mean a unital commutative quantale $Q$. 
The base quantale is the structure in which $Q$-orders and $Q$-subsets 
are to be evaluated. For developing the theory in the rest of this paper, let 
$Q$ be an arbitrary base quantale that remains fixed from now on. Note 
that we do not require the multiplicative unit $1$ of the base quantale to 
be its greatest element $\top$.

Let $X$ be a set. A mapping $\map e {X \times X} Q$ is called a 
\emph{\bfseries$Q$-order} if for any $x,y,z \in X$ the following are satisfied:

\begin{enumerate}

\item $e(x,x) \geq 1$ (reflexivity),

\item $e(x,y) \cdot e(y,z) \leq e(x,z)$ (transitivity),

\item if $e(x,y) \geq 1$ and $e(y,x) \geq 1$, then $x = y$ (antisymmetry).

\end{enumerate}

The pair $(X, e)$ is then called a \emph{\bfseries $Q$-ordered set}. For 
a $Q$-order $e$ on $X$, the relation $\leq_e$ defined as 
$x \leq_e y \iff e(x,y) \geq 1$ is a partial order in the usual sense. This 
means that any $Q$-ordered set can be viewed as an ordinary poset satisfying additional properties.

Vice versa, for a partial order $\leq$ on a set $X$ we can define a $Q$-order $e_\leq$ by

$$ e_\leq(x,y) = \begin{cases} 1, & \mbox{if } x \leq y, \\ 0, & \mbox{otherwise}. \end{cases} $$
A \emph{\bfseries$Q$-subset} of a set $X$ is an element of the set $Q^X$.

For $Q$-subsets $M, N$ of a set $X$, we define the \emph{subsethood degree} of $M$ in $N$ as
$$sub_X(M,N) = \bigwedge_{x \in X} (M(x) \to N(x)).$$
In particular, $(Q^X, sub_X)$ is a $Q$-ordered set. 

Let $M$ be a $Q$-subset of a $Q$-ordered set $(X,e)$. An element 
$s$ of $X$ is called a \emph{\bfseries$Q$-join} of $M$, denoted $\bigsqcup M$ if:

\begin{enumerate}
\item $M(x) \leq e(x,s)$ for all $x \in X$, and
\item for all $y \in X$, $\bigwedge_{x \in X}(M(x) \to e(x,y)) \leq e(s,y)$.
\end{enumerate}

If $\bigsqcup M$ exists for any $M \in Q^X$, we call $(X,e)$ 
\emph{\bfseries $Q$-join complete}, or a \emph{\bfseries$Q$-sup-lattice}.

Let $X$ and $Y$ be sets, and $f \colon X \to Y$ be a mapping. 
\emph{\bfseries Zadeh's forward power set operator} for $f$ maps 
$Q$-subsets of $X$ to $Q$-subsets of $Y$ by 
$$ f_Q^\to (M)(y) = \bigvee_{x \in f^{-1}(y)} M(x).$$

Let $(X, e_X)$ and $(Y, e_Y)$ be $Q$-ordered sets. We say that 
a mapping $f \colon X \to Y$ is \emph{\bfseries $Q$-join-preserving} if for 
any $Q$-subset $M$ of $X$ such that $\bigsqcup M$ exists, $\bigsqcup\nolimits_Y f_Q^\to (M)$ exists and
$$ f\left(\bigsqcup\nolimits_X M\right) = \bigsqcup\nolimits_Y f_Q^\to (M).$$

 It is known that  the category 
$Q\mhyphen\cat{Sup}$ of $Q$-sup-lattices and  $Q$-join-preserving mappings 
is isomorphic to  the category $Q\mhyphen\cat{Mod}$ of  $Q$-modules 
(see \cite{solovyov-qa,stubbe-tcc}). We have functors $F$ and $Q$ such that 
\begin{enumerate}
\item $F \colon Q\mhyphen\cat{Mod} \to Q\mhyphen\cat{Sup}$, given a $Q$-module $A$: \newline
$e(a,b) = a \to_Q b$, and $\Qsup M = \sup_{a \in A}(M(a) * a)$
\item $G \colon Q\mhyphen\cat{Sup} \to Q\mhyphen\cat{Mod}$, given a $Q$-sup-lattice $A$: \newline
$a \leq b \iff 1 \leq e(a,b)$, $\sup S = \Qsup M_S^1$, $q * a = \Qsup M_a^q$ where $M_S^q(a) = \begin{cases} q & \text{if } x \in S, \\ \bot & \text{otherwise}.\end{cases}$
\item A mapping that is a morphism in either category, becomes a morphism in the other one as well.
\item $G \circ F = 1_{Q\mhyphen\cat{Mod}}$ and 
$F \circ G = 1_{Q\mhyphen\cat{Sup}}$, i.e., 
the categories $Q\mhyphen\cat{Mod}$ and  $Q\mhyphen\cat{Sup}$ are isomorphic.
\end{enumerate}
Hence results on quantale modules can be directly transferred to $Q$-sup-lattices and conversely. 
We will speak about {\em\bfseries Solovyov's isomorphism}.

\subsection{$Q$-sup-algebras and $Q$-module-algebras} 

A \emph{\bfseries $Q$-sup-algebra of type $\Omega$} (shortly, a \emph{\bfseries $Q$-sup-algebra}) is 
a triple $\structA = (A, \Qsup, \Omega)$ where $(A,\Qsup)$ is a $Q$-sup-lattice, $(A,\Omega)$ is 
an $\Omega$-algebra, and each operation $\omega$ is $Q$-join-preserving in any component, that is,
$$
\begin{array}{l}
\omega\left( a_1,\dots,a_{j-1},\Qsup M,a_{j+1},\dots,a_n \right) %
=\\
 \Qsup \omega(a_1,\dots,a_{j-1},-,a_{j+1},\dots,a_n)_Q^\to (M)
 \end{array}
$$

\noindent{}for any $n \in \N$, $\omega \in \Omega_n$, 
$j \in \{1,\dots,n\}$, $a_1,\dots,a_n \in A$, and $M \in Q^A$.

Let $(A, \Qsup_A, \Omega)$ and $(B, \Qsup_B, \Omega)$ be $Q$-sup algebras, 
and $\map \phi A B$ be a $Q$-join-preserving mapping and a sup-algebra homomorphism. Then 
$\phi$ is called a \emph{\bfseries $Q$-sup-algebra homomorphism}. 

As instances of $Q$-sup-algebras, we may typically encounter the $Q$-counterparts of those from examples of sup-algebras:

\begin{enumerate}

\item $Q$-sup-lattices ($\Omega = \emptyset$),

\item {\em\bfseries$Q$-quantales} ($\Omega = \{ \cdot \}$) such $\cdot$ is an 
associative binary operation.  $Q$-quantales correspond via Solovyov's isomorphism to 
quantale algebras \cite{solovyov-qa}. 
\end{enumerate}

By \emph{\bfseries $Q$-module-algebra of type $\Omega$} (shortly, \emph{$Q$-module-algebra}) 
we will denote the structure $\structA = (A,\sup,*,\Omega)$ where $(A,\sup,*)$ 
is a $Q$-module, $(A,\Omega)$ is an $\Omega$-algebra, and 
each operation $\omega$ is a $Q$-module homomorphism in any component, that is,
\begin{align*}
& \omega \left(a_1,\dots,a_{j-1},\sup B,a_{j+1},\dots,a_n \right) \\
& \qquad = \sup \{ \omega(a_1,\dots,a_{j-1},b,a_{j+1},\dots,a_n) \mid b \in B \}, \\
& \omega \left(a_1,\dots,a_{j-1},q * b,a_{j+1},\dots,a_n \right) \\
& \qquad =  q * \omega(a_1,\dots,a_{j-1},b,a_{j+1},\dots,a_n)
\end{align*}
for any $n \in \N$, $\omega \in \Omega_n$, $j \in \{1,\dots,n\}$, $a_1,\dots,a_n,b \in A$, and $B \subseteq A$.

A mapping $\map \phi A B$ from a $Q$-module-algebra $(A,\sup,*,\Omega)$ to 
a $Q$-module-algebra $(B,\sup,*,\Omega)$ is called a 
\emph{\bfseries $Q$-module-algebra homomorphism} if it is 
both a $Q$-module homomorphism and an $\Omega$-algebra homomorphism.



For a given quantale $Q$ and a type $\Omega$, let $\catQSupOAlg$ 
denote the category of $Q$-sup-algebras of type $\Omega$ with $Q$-sup-algebra 
homomorphisms, and $\catQModOAlg$ the category of 
$Q$-module-algebras of type $\Omega$ with $Q$-module-algebra homomorphisms. 
From \cite[Theorem  3.3.15.]{slesinger} we know that 
$\catQSupOAlg$ and $\catQModOAlg$ are 
isomorphic via Solovyov's isomorphism.

\section{Some categorical properties of $Q$-sup-algebras} 
\label{sectimpprop}

In this section we establish some categorical properties of the 
category $\catQSupOAlg$ needed in
the sequel. We begin with a construction of a $Q$-sup-algebra of type 
$\Omega$ from 
an $\Omega$-algebra to obtain an adjoint situation. Using this result 
we prove that $\catQSupOAlg$ is a monadic construct 
(see \cite{adamek}).

As shown e.g. in \cite[Theorem~2.2.43]{slesinger}, 
$Q^X$ is the free $Q$-sup-lattice over a set $X$ (and also a free $Q$-module over $X$). 
But we can state more.

\begin{theorem} \label{freecon} Any $\Omega$-algebra $A$ gives 
rise to a $Q$-sup-algebra $Q^A$ with operations defined by
$$\begin{array}{l}\omega_{Q^A}(A_1,\dots,A_n)(a) =\\
 \sup_{\omega_A(a_1,\dots,a_n) = a} A_1(a_1)\cdot \ldots\cdot A_n(a_n),
 \end{array}$$
given $n \in \N$, $a_1,\dots,a_n \in A$, $A_1,\dots,A_n \in Q^A$, $\omega \in \Omega_n$.
\end{theorem}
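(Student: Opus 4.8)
The plan is to avoid verifying the $Q$-join-preservation condition directly --- which would force us to manipulate Zadeh's forward operator on $Q$-subsets of $Q^A$ --- and instead to pass through Solovyov's isomorphism. Since $\catQSupOAlg$ and $\catQModOAlg$ are isomorphic (with the operations $\omega$ carried across unchanged and the $Q$-sup-lattice structure corresponding to the $Q$-module structure), it suffices to show that $(Q^A,\sup,*,\Omega)$ is a $Q$-module-algebra, where $\sup$ and $*$ are the join and scalar action of the free $Q$-module over $A$. Recall that $Q^A$ is already known to be the free $Q$-module (equivalently, the free $Q$-sup-lattice) over $A$, with both operations computed pointwise, $(\bigvee B)(a)=\bigvee_{N\in B}N(a)$ and $(q*N)(a)=q\cdot N(a)$. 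Each $\omega_{Q^A}(A_1,\dots,A_n)$ is manifestly an element of $Q^A$ (the defining join exists because $Q$ is a complete lattice), so $(Q^A,\Omega)$ is a well-defined $\Omega$-algebra. It then remains only to check the two defining identities of a $Q$-module-algebra in each component $j$: join-preservation and scalar-preservation.

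For join-preservation, fix all arguments but the $j$-th, take $B\subseteq Q^A$, and evaluate at $a\in A$. Since joins in $Q^A$ are pointwise, the defining formula shows that the left-hand side $\omega_{Q^A}(A_1,\dots,\bigvee B,\dots,A_n)(a)$ equals
\[
\bigvee_{\omega_A(a_1,\dots,a_n)=a}A_1(a_1)\cdots\Big(\bigvee_{N\in B}N(a_j)\Big)\cdots A_n(a_n).
\]
Distributivity of $\cdot$ over arbitrary joins in $Q$ turns this into a double join indexed by pairs $\big((a_1,\dots,a_n),N\big)$; exchanging the order of the two joins yields exactly $\big(\bigvee_{N\in B}\omega_{Q^A}(A_1,\dots,N,\dots,A_n)\big)(a)$, which is the right-hand side. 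This step uses only that $Q$ is a quantale and that joins in $Q^A$ are pointwise.

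The scalar-preservation identity is where commutativity of the base quantale becomes essential, and this is the step I expect to be the crux. Evaluated at $a$, the left-hand side is
\[
\bigvee_{\omega_A(a_1,\dots,a_n)=a}A_1(a_1)\cdots\big(q\cdot N(a_j)\big)\cdots A_n(a_n),
\]
while the right-hand side is $q\cdot\bigvee_{\omega_A(a_1,\dots,a_n)=a}A_1(a_1)\cdots N(a_j)\cdots A_n(a_n)$. To match them I would use commutativity of $Q$ to move the factor $q$ out of the $j$-th position to the front of each product, rewriting the summand as $q\cdot\big(A_1(a_1)\cdots N(a_j)\cdots A_n(a_n)\big)$, and then pull $q$ out of the join by distributivity. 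Without commutativity this rearrangement fails, so this is precisely where the standing assumption that $Q$ is a base (unital commutative) quantale is consumed.

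Having established both identities for every $n\in\N$, every $\omega\in\Omega_n$, and every component $j$, the structure $(Q^A,\sup,*,\Omega)$ is a $Q$-module-algebra, and applying the inverse of Solovyov's isomorphism returns the desired $Q$-sup-algebra $(Q^A,\Qsup,\Omega)$ with the stated operations. (Nullary symbols carry no preservation condition; for them $\omega_{Q^A}$ is simply the constant $Q$-subset taking value $1$ on $\omega_A$ and $\bot$ elsewhere, obtained from the empty product in the defining formula.)
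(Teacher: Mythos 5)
Your proposal is correct and follows essentially the same route as the paper: reduce via Solovyov's isomorphism to checking that $Q^A$ is a $Q$-module-algebra, then verify componentwise join-preservation by distributivity of $\cdot$ over joins and scalar-preservation by commutativity of $Q$ plus distributivity. Your additional remarks on well-definedness and on nullary symbols are fine but do not change the argument.
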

\begin{proof} By Solovyov's isomorphism it is 
enough to check that  $Q^A$  is a $Q$-module-algebra of type $\Omega$. 
We have 
$$\begin{array}{l}
\omega \left(A_1,\dots,A_{j-1},\sup \Gamma,A_{j+1},\dots,A_n \right)(a) =\\
\sup_{\omega_A(a_1,\dots,a_n) = a} %
A_1(a_1)\cdot\ldots\cdot A_{j-1}(a_{j-1})\cdot\\ 
\sup \Gamma(a_j)\cdot A_{j+1}(a_{j+1})\cdot \ldots\cdot A_n(a_n)=\\
 \sup \{ \sup_{\omega_A(a_1,\dots,a_n) = a} %
A_1(a_1)\cdot\ldots\cdot A_{j-1}(a_{j-1})\cdot\\ 
B(a_j)\cdot A_{j+1}(a_{j+1})\cdot \ldots\cdot A_n(a_n)) \mid B \in \Gamma \}=\\
 \sup \{ \omega(A_1,\dots,A_{j-1},B,A_{j+1},\dots,A_n) \mid B \in \Gamma \}(a)\\
  \end{array}$$
  and
$$\begin{array}{l}
\omega \left(A_1,\dots,A_{j-1},q * B,a_{j+1},\dots,A_n \right)(a)= \\
\sup_{\omega_A(a_1,\dots,a_n) = a} %
A_1(a_1)\cdot\ldots\cdot A_{j-1}(a_{j-1})\cdot \\ 
(q \cdot B(a_j))\cdot A_{j+1}(a_{j+1})\cdot \ldots\cdot A_n(a_n)=\\
\sup_{\omega_A(a_1,\dots,a_n) = a} %
q \cdot (A_1(a_1)\cdot \ldots\cdot A_{j-1}(a_{j-1})\cdot \\ 
B(a_j)\cdot A_{j+1}(a_{j+1})\cdot \ldots\cdot A_n(a_n))=\\
q \cdot \sup_{\omega_A(a_1,\dots,a_n) = a} %
 A_1(a_1)\cdot\ldots\cdot A_{j-1}(a_{j-1})\cdot \\ 
B(a_j)\cdot A_{j+1}(a_{j+1})\cdot \ldots\cdot A_n(a_n)=\\
q * \omega(A_1,\dots,A_{j-1},B,A_{j+1},\dots,A_n)(a)
\end{array}$$
for any $a\in A$, $n \in \N$, $\omega \in \Omega_n$, $j \in \{1,\dots,n\}$, 
$a_1,\dots,a_n \in A$, $A_1,\dots,A_n,B \in Q^A$,
$\Gamma \subseteq Q^A$, and $q \in Q$.
\end{proof}

Note  that there exists the following commutative triangle of 
  the obvious forgetful functors (notice that $\cat{Set}$ is the category
of sets and mappings, and $\catOAlg$ is the category of 
algebras of type $\Omega$ and their homomorphisms): 
\begin{center}
\begin{tikzcd}
\mbox{$\catQSupOAlg$}%
 \arrow{dr}[swap]{V} \arrow{rr}{U} & & \mbox{{\bf Set}} \\
& \mbox{$\catOAlg$} \arrow{ur}[swap]{W} &
\end{tikzcd}
\end{center}

\begin{theorem} The forgetful functor $V\colon \catQSupOAlg \to \catOAlg$
has a left adjoint $F_{Q}$.
\end{theorem}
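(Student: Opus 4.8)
The plan is to show that the object assignment $A \mapsto Q^A$ of Theorem~\ref{freecon} underlies the desired left adjoint, by producing for every $\Omega$-algebra $A$ a universal arrow from $A$ to $V$. I would work throughout on the $Q$-module-algebra side of Solovyov's isomorphism, where $Q^A$ is a $Q$-module-algebra by Theorem~\ref{freecon} and, moreover, the free $Q$-module over the set $A$ by \cite[Theorem~2.2.43]{slesinger}; this lets me borrow the module freeness for the sup-lattice part and concentrate on the operations. I would take as unit the map $\map{\eta_A}{A}{V(Q^A)}$ defined by $\eta_A(a) = \chi_a$, where $\chi_a(x) = 1$ if $x = a$ and $\chi_a(x) = \bot$ otherwise.

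First I would check that $\eta_A$ is an $\Omega$-algebra homomorphism. For $\omega \in \Omega_n$ and $a_1,\dots,a_n \in A$ one has $\omega_{Q^A}(\chi_{a_1},\dots,\chi_{a_n})(a) = \sup_{\omega_A(b_1,\dots,b_n)=a} \chi_{a_1}(b_1)\cdots\chi_{a_n}(b_n)$, and the only surviving summand is the one with $b_i = a_i$ for all $i$, each factor then being the unit $1$ and their product remaining $1$; hence the value is $\chi_{\omega_A(a_1,\dots,a_n)}(a)$, i.e.\ $\eta_A(\omega_A(a_1,\dots,a_n))$. The nullary case is the same computation with the empty product equal to $1$.

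Next, the universal property. Given a $Q$-sup-algebra $B$ and an $\Omega$-algebra homomorphism $\map{f}{A}{V(B)}$, freeness of $Q^A$ as a $Q$-module yields a unique $Q$-module homomorphism $\map{\bar f}{Q^A}{B}$ with $\bar f(M) = \sup_{a\in A} M(a) * f(a)$, and $\bar f(\eta_A(a)) = 1 * f(a) = f(a)$ since $\chi_a$ selects a single generator. The main step is to verify that $\bar f$ also preserves the $\Omega$-operations. Expanding $\bar f(\omega_{Q^A}(M_1,\dots,M_n))$, using that $*$ preserves joins in its scalar argument and reindexing the double supremum over the fibres of $\omega_A$, gives $\sup_{a_1,\dots,a_n} (M_1(a_1)\cdots M_n(a_n)) * f(\omega_A(a_1,\dots,a_n))$; applying the homomorphism property of $f$ and then the fact that $\omega_B$ preserves joins in each argument and factors out scalars (their product being unambiguous by the module-algebra axioms and associativity of $Q$) reproduces $\omega_B(\bar f(M_1),\dots,\bar f(M_n))$.

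Finally, uniqueness of $\bar f$ follows since any $Q$-sup-algebra homomorphism agreeing with $f$ after $\eta_A$ is in particular $Q$-join-preserving and so determined by its values on the generators, using $M = \sup_{a} M(a) * \chi_a$ in $Q^A$. Thus each $\eta_A$ is a universal arrow and, by the universal-arrow criterion for adjunctions \cite{adamek}, $V$ has a left adjoint $F_Q$ with $F_Q(A) = Q^A$. The principal obstacle is the operation-preservation computation for $\bar f$: the bookkeeping of the suprema over the fibres of $\omega_A$ and the careful extraction of the scalar product through $\omega_B$ are where the argument must be handled with care, while the remaining parts reduce to the already-established freeness of $Q^A$ as a $Q$-module and to Solovyov's isomorphism.
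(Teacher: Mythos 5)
Your proposal is correct and follows essentially the same route as the paper: the same unit $a\mapsto\chi_a$ (the paper's $\alpha_a$), the same extension $\bar f(M)=\bigvee_{a}M(a)*f(a)$, and the same key computation reindexing the double supremum over the fibres of $\omega_A$ to show $\bar f$ preserves the operations. The only difference is that you spell out the uniqueness argument via $M=\bigvee_a M(a)*\chi_a$, which the paper delegates to Solovyov's Theorem~3.1.
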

\begin{proof} Let $A$ be an $\Omega$-algebra. Let us show that $Q^A$ is a free 
$Q$-module-algebra over $A$. For every $a\in A$ there exists a map $\alpha_a\in Q^A$ defined by 
$$ \alpha_a(b) = \begin{cases} 1, & \mbox{if } a=b, \\ \bot, & \mbox{otherwise}. \end{cases} $$
As in \cite[Theorem 3.1]{solovyov-qa} for quantale algebras, we obtain 
a $\Omega$-algebra homomorphism $\eta_A\colon A \to VQ^{A}$ defined by $\eta_A(a)=\alpha_a$. 
Namely, $\eta_A(\omega \left(a_1,\dots,a_n \right))(a)=%
\alpha_{\omega \left(a_1,\dots,a_n \right)}(a)=1$ if and only if
$\omega \left(a_1,\dots,a_n \right)=a$ (otherwise it is $\bot$), and  
$$\begin{array}{l}
\omega \left(\eta_A(a_1),\dots,\eta_A(a_n) \right)(a)=\\%
\sup_{\omega_A(b_1,\dots,b_n) = a} \eta_A(b_1)\cdot \ldots\cdot \eta_A(b_n)=1
\end{array}$$
if and only if $a_j=b_j$ for all $j \in \{1,\dots,n\}$ and $\omega_A(b_1,\dots,b_n) = a$ (otherwise it is $\bot$).  
Hence we obtain
 $$\eta_A(\omega \left(a_1,\dots,a_n \right))=\omega \left(\eta_A(a_1),\dots,\eta_A(a_n) \right).$$
 
 It is easy to show that for every homomorphism $f\colon A\to VB$ in $\catOAlg$ 
 there exists a unique homomorphism $\overline{f}\colon Q^A\to B$ in 
 $\catQSupOAlg$ (given by 
 $\overline{f}(\alpha)=\bigvee_{a\in A} \alpha(a)*f(a)$, where $*$ is the module action on $B$ 
 given by Solovyov's isomorphism) 
 such that the triangle 
 \begin{center}
\begin{tikzcd}
A \arrow{dr}[swap]{f} \arrow{rr}{\eta_{A}} & & VQ^{A} \arrow[dashed]{dl}{V\overline{f}} \\
& VB &
\end{tikzcd}
\end{center}
commutes. We will only check that 
$$\overline{f}(\omega(\alpha_1, \dots, \alpha_n))=%
\omega_B(\overline{f}(\alpha_1), \dots, \overline{f}(\alpha_n)).$$
Let us compute
$$\begin{array}{@{}l}
\overline{f}(\omega(\alpha_1, \dots, \alpha_n))=%
\bigvee_{a\in A} \omega(\alpha_1, \dots)(a)*f(a)\\
=\bigvee_{a\in A} \sup_{\omega_A(a_1,\dots,a_n) = a} 
\alpha_1(a_1)\cdot \ldots\cdot \alpha_n(a_n)*f(a)\\
=\bigvee 
\{(\alpha_1(a_1)\cdot \ldots\cdot \alpha_n(a_n))*f(\omega_A(a_1,\dots,a_n))\mid \\
\phantom{= }\ {a_1,\dots,a_n\in A}\}=\\
\bigvee \{(\alpha_1(a_1)\cdot \ldots\cdot \alpha_n(a_n))*\omega_B(f(a_1),\dots))\mid \\
\phantom{= }\ {a_1,\dots,a_n\in A}\}=\\
\omega_B(\bigvee \{\alpha_1(a_1)* f(a_1) \mid a_1\in A\},\dots))= \\
\omega_B(\overline{f}(\alpha_1), \dots, \overline{f}(\alpha_n)).
\end{array}$$
The remaining properties of  $\overline{f}$ follow by the same considerations as in \cite[Theorem 3.1]{solovyov-qa}.
\end{proof}

\begin{remark}Note  that similarly as in  \cite[Remark 3.2]{solovyov-qa} we obtain 
an adjoint situation $(\eta,\epsilon)\colon F_{Q} \dashv V \colon \catQSupOAlg \to 
\catOAlg$, where $F_Q(A)=$ $Q^{A}$  for every $\Omega$-algebra $A$ of type $\Omega$ and 
$\epsilon_B\colon F_{Q}VB\to B$ is given by $\epsilon_B(\alpha)=\bigvee_{b\in B}\alpha(b)*b=\bigsqcup \alpha$ 
for every $Q$-sup-algebra $B$.
\end{remark}

Since the functor $W\colon \catOAlg \to \cat{Set}$ has a left adjoint (see \cite{adamek}), we obtain 
the following. 

\begin{corollary}
$U\colon \catQSupOAlg \to \cat{Set}$ has a left adjoint.
\end{corollary}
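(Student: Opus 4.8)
The plan is to exploit the fact, recorded in the commutative triangle above, that $U = W \circ V$, and then to invoke the elementary categorical principle that left adjoints compose. All the substantive work has already been done: the preceding theorem supplies a left adjoint $F_Q$ to $V$, and the free $\Omega$-algebra functor is a left adjoint to $W$ by standard universal algebra (as cited in \cite{adamek}). So the corollary should require only the assembly of these two pieces.

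Concretely, first I would name the two left adjoints: write $F_Q \colon \catOAlg \to \catQSupOAlg$ for the left adjoint of $V$ furnished by the previous theorem, and let $L \colon \cat{Set} \to \catOAlg$ denote the free $\Omega$-algebra functor, which is left adjoint to $W$. Thus $F_Q \dashv V$ and $L \dashv W$. Then I would claim that the composite $F_Q \circ L \colon \cat{Set} \to \catQSupOAlg$ is left adjoint to $U$. The cleanest justification is the chain of natural bijections
\[
\catQSupOAlg(F_Q L X, B) \cong \catOAlg(L X, V B) \cong \cat{Set}(X, W V B) = \cat{Set}(X, U B),
\]
where the first isomorphism is the adjunction $F_Q \dashv V$, the second is $L \dashv W$, and the final equality is the commutativity $U = W V$ of the triangle. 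Both isomorphisms are natural in $X \in \cat{Set}$ and $B \in \catQSupOAlg$, so their composite exhibits $F_Q \circ L \dashv U$. (Equivalently, one may compose the units and counits of the two adjunctions directly.)

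I do not expect any genuine obstacle here, since the corollary is a formal consequence of the theorem together with a well-known fact of universal algebra. The only point that warrants a moment's care is bookkeeping: one must identify $U$ with $W \circ V$ using the commutative triangle, and compose the left adjoints in the correct order, so that the left adjoint of $U = W V$ is $F_Q \circ L$ rather than $L \circ F_Q$. Once the order is pinned down, the verification is immediate.
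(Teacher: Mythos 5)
Your proposal is correct and follows exactly the paper's argument: the paper likewise notes that $W$ has a left adjoint (citing \cite{adamek}), combines this with the left adjoint $F_Q$ of $V$ from the preceding theorem, and uses the factorization $U = W \circ V$ together with the composability of adjoints. Your write-up merely makes explicit the chain of natural bijections that the paper leaves implicit.
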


By the same categorical arguments as in   \cite{solovyov-qa} we obtain the following theorem and corollary. 

\begin{theorem}
 The category $\catQSupOAlg$ of $Q$-sup-algebras of type $\Omega$ is a monadic construct.
\end{theorem}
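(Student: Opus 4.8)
The plan is to deduce the result from Beck's monadicity theorem in the form recorded in \cite{adamek}: the construct given by the forgetful functor $U\colon \catQSupOAlg \to \cat{Set}$ is monadic provided $U$ has a left adjoint and $U$ creates coequalizers of those parallel pairs whose underlying maps admit a split coequalizer in $\cat{Set}$. The left adjoint is already in hand by the preceding Corollary, so everything reduces to verifying the creation of these ($U$-split) coequalizers, together with the auxiliary observation that $U$ reflects isomorphisms. Note that one cannot simply compose the monadicities of $V$ and $W$, since monadic functors do not compose to monadic functors in general; the argument must be made for $U$ directly.

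Throughout I would transport the problem to $\catQModOAlg$ via Solovyov's isomorphism, where the structure is purely equational and the bookkeeping is cleanest. In this picture a $Q$-module-algebra carries the (infinitary) join $\bigvee$, the scalar actions $q*-$ for $q\in Q$, and the finitary operations $\omega\in\Omega$, all subject to identities: the module axioms and the distributivity of each $\omega$ over $\bigvee$ and over $*$ in every argument. Reflection of isomorphisms is then immediate: if $\phi\colon A\to B$ is a homomorphism bijective on underlying sets, then for each $q\in Q$ and $b\in B$ one has $\phi^{-1}(q*b)=q*\phi^{-1}(b)$, and likewise $\phi^{-1}$ preserves $\bigvee$ and every $\omega$, so $\phi^{-1}$ is again a homomorphism and $\phi$ is an isomorphism.

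For the creation of $U$-split coequalizers, let $f,g\colon A\rightrightarrows B$ be a parallel pair in $\catQModOAlg$ whose underlying maps have a split coequalizer $p\colon UB\to C$ in $\cat{Set}$, the splittings providing canonical representatives in $UB$ for the elements of $C$. I would use these splittings to transport each operation of $B$ to $C$: the value of $\bigvee$, of each $q*-$, and of each $\omega$ on elements of $C$ is forced by the requirement that $p$ be a homomorphism, hence both well defined and unique. Because $p$ is a \emph{split}, and therefore absolute, coequalizer, each defining identity of $B$ descends to $C$, so $C$ becomes a $Q$-module-algebra and $p$ a $Q$-module-algebra homomorphism; a standard absolute-coequalizer diagram chase then shows that this lifted $p$ is a coequalizer of $f,g$ in $\catQModOAlg$ and that the lift is unique. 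Translating back through Solovyov's isomorphism yields creation of the coequalizer in $\catQSupOAlg$.

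The main obstacle is precisely this transport step: one must confirm that the infinitary join and all the scalar actions descend along $p$ in a manner compatible with the finitary $\Omega$-operations, i.e. that the distributivity identities linking each $\omega$ with $\bigvee$ and with $*$ are preserved. Since every defining condition is an equation and $p$ is an absolute coequalizer, this is guaranteed, and the computation runs exactly parallel to the corresponding argument for quantale algebras in \cite{solovyov-qa}. With the left adjoint from the Corollary and creation of $U$-split coequalizers established, Beck's theorem gives that the comparison functor into the Eilenberg--Moore construct is an isomorphism, that is, $\catQSupOAlg$ is a monadic construct.
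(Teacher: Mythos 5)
Your proposal is correct and follows essentially the same route as the paper, which simply invokes ``the same categorical arguments as in \cite{solovyov-qa}'' --- namely Beck's criterion: the left adjoint from the preceding Corollary together with creation of coequalizers of $U$-split pairs (the equational $Q$-module-algebra structure, transported via Solovyov's isomorphism, descends along absolute coequalizers). Your additional remarks on reflection of isomorphisms and on not composing the monadicities of $V$ and $W$ are sound and consistent with the intended argument.
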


\begin{corollary}
The category $\catQSupOAlg$ 
is complete, cocomplete, wellpowered, 
extremally co-wellpowered, and has regular factorizations. Moreover, monomorphisms 
are precisely those morphisms that are injective functions.
\end{corollary}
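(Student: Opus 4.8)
The plan is to deduce everything from the preceding theorem, which identifies $\catQSupOAlg$ (up to isomorphism) with the Eilenberg--Moore category of the monad $\mathbb{T}$ on $\cat{Set}$ induced by the monadic functor $U = W\circ V$ and its left adjoint (the composite of $F_Q$ with the free $\Omega$-algebra functor). Once this identification is in place, each listed property is a standard consequence of monadicity over $\cat{Set}$, so the proof is essentially a matter of citing the appropriate general results from \cite{adamek} --- exactly as Solovyov does for $Q$-algebras in \cite{solovyov-qa}.

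First I would settle \emph{completeness}: the forgetful functor of any monadic category creates limits, and $\cat{Set}$ is complete, so $U$ creates, and $\catQSupOAlg$ admits, all small limits. For \emph{cocompleteness}, \emph{wellpoweredness}, \emph{extremal co-wellpoweredness}, and the existence of \emph{regular factorizations}, I would appeal directly to the theory of monadic constructs (equivalently, algebraic categories over $\cat{Set}$) developed in \cite{adamek}: every such construct inherits these four properties from $\cat{Set}$.

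Finally, for the characterization of monomorphisms I would argue as follows. The functor $U$ is faithful, hence reflects monomorphisms, so every morphism with injective underlying function is a monomorphism. Conversely, since $U$ creates limits it preserves the kernel pair of any morphism $f$; if $f$ is a monomorphism its kernel pair is the diagonal, and preservation forces the kernel pair of $U(f)$ in $\cat{Set}$ to be the diagonal as well, i.e. $U(f)$ is injective. Thus monomorphisms coincide exactly with the injective homomorphisms.

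I expect the main obstacle to be \emph{cocompleteness}. Unlike completeness, which follows formally from creation of limits, cocompleteness of an Eilenberg--Moore category is not automatic and genuinely uses that the base is $\cat{Set}$ (via, e.g., the construction of coequalizers of reflexive pairs, packaged into the notion of an algebraic category in \cite{adamek}). I would therefore take care to invoke the $\cat{Set}$-specific theorem rather than a purely formal monadicity argument, and note that all remaining items then reduce to quoting that single result.
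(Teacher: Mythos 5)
Your proposal is correct and follows essentially the same route as the paper, which simply derives the corollary from the preceding theorem (that $\catQSupOAlg$ is a monadic construct) ``by the same categorical arguments as in \cite{solovyov-qa}'', i.e.\ by quoting the standard facts about monadic categories over $\cat{Set}$ from \cite{adamek}. Your additional details --- creation of limits for completeness, the $\cat{Set}$-specific cocompleteness theorem for monadic constructs, and the kernel-pair argument for the characterization of monomorphisms --- are exactly the arguments being implicitly invoked.
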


\section{$Q$-nuclei in $\catQModOAlg$ and their properties}
\label{nucleiprop}

In this section   we introduce the notion of a $Q$-MA-nucleus for $Q$-module-algebras and present its 
main properties. 

\begin{defn}\label{qnucdef} \rm Let $A$ be a $Q$-module-algebra of type $\Omega$. A 
{\em\bfseries $Q$-module-algebra nucleus on} $A$ (shortly {\bfseries \em $Q$-MA-nucleus} is a
map $j\colon A\to A$ such that 
for any $n \in \N$, $\omega \in \Omega_n$, 
$a, b, a_1,\dots,a_n\in A$, and $q \in Q$:
\begin{enumerate}[(i)]
\item $a \leq b$ implies $j(a) \leq j(b)$;
\item $a \leq j(a)$;
\item  $j \circ j(a) \leq j(a)$;
\item $\omega(j(a_1), \ldots, j(a_n)) \leq  j(\omega(a_1, \ldots, a_n))$;
\item $q * j(a) \leq j(q * a)$.
\end{enumerate}
\end{defn}

Note that by Solovyov's isomorphism $Q$-MA-nuclei correspond to $Q$-ordered algebra nuclei introduced 
in \cite{slesinger}. 

By the same  straightforward computations as in  
\cite[Proposition 4.2 and Corollary 4.3]{solovyov-qa} 
or \cite[Proposition 3.3.6 and Proposition 3.3.8]{slesinger} we obtain the following. 

\begin{proposition} Let $A$ be a $Q$-module-algebra of type $\Omega$ and  $j$ be a $Q$-MA-nucleus on $A$. 
For  any $n \in \N$, $\omega \in \Omega_n$, 
$a, a_1,\dots,a_n\in A$, $S\subseteq A$, and $q \in Q$ we have:
\begin{enumerate}[{\rm (i)}]
\item $(j \circ j)(a) = j(a)$;
\item $j(\bigvee S) = j(\bigvee j(S))$;
\item $ j(\omega(a_1, \ldots, a_n)) = j(\omega(j(a_1), \ldots, j(a_n)))$;
\item $j(q * a) = j(q * j(a))$.
\end{enumerate}
\end{proposition}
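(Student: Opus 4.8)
The plan is to prove (i) first, since it supplies idempotence of $j$, and then to obtain (ii)--(iv) from a single recurring two-sided argument. For (i) I would combine axioms (ii) and (iii) of Definition \ref{qnucdef}: axiom (iii) gives $(j\circ j)(a)\le j(a)$ directly, while instantiating the inflationary axiom (ii) at the element $j(a)$ gives $j(a)\le j(j(a))=(j\circ j)(a)$. Antisymmetry of the underlying order then yields $(j\circ j)(a)=j(a)$.

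With idempotence in hand, each of (ii), (iii), (iv) splits into two opposite inequalities that follow the same template: the ``easy'' inequality comes from the inflationary axiom (ii) together with order-preservation of the operation in question, and the reverse inequality comes from the corresponding structural nucleus axiom, followed by an application of $j$ and a cancellation via idempotence. Concretely, for (ii) (where $j(S)$ denotes the image $\{j(s)\mid s\in S\}$) I would use $s\le j(s)$ to get $\bigvee S\le\bigvee j(S)$, hence $j(\bigvee S)\le j(\bigvee j(S))$ by monotonicity; conversely, from $s\le\bigvee S$ and axiom (i) I get $j(s)\le j(\bigvee S)$ for every $s$, so $\bigvee j(S)\le j(\bigvee S)$, and applying $j$ gives $j(\bigvee j(S))\le (j\circ j)(\bigvee S)=j(\bigvee S)$.

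Parts (iii) and (iv) are identical in spirit. For (iii), inflationarity plus order-preservation of $\omega$ in each argument gives $\omega(a_1,\dots,a_n)\le\omega(j(a_1),\dots,j(a_n))$, hence the inequality persists after applying $j$; the reverse uses nucleus axiom (iv), $\omega(j(a_1),\dots,j(a_n))\le j(\omega(a_1,\dots,a_n))$, followed by $j$ and idempotence. For (iv), order-preservation of the module action $q*-$ together with $a\le j(a)$ gives $j(q*a)\le j(q*j(a))$, and nucleus axiom (v), $q*j(a)\le j(q*a)$, gives the reverse after applying $j$ and cancelling via (i).

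There is no substantial obstacle here; the closest thing to a point requiring care is the justification that each $\omega$ and each action $q*-$ is order-preserving in every argument. I would note that this is immediate, since join-preservation in a component entails monotonicity in that component, a property built into the definitions of $Q$-module-algebra and $Q$-module. All four items then reduce to short inequality chains anchored by the idempotence established in (i).
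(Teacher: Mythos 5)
Your proof is correct, and it matches the paper's intent: the paper gives no explicit argument, deferring to ``the same straightforward computations'' in Solovyov's and \v{S}lesinger's cited results, and those computations are exactly the standard two-sided inequality chains you carry out (idempotence from nucleus axioms (ii)--(iii), then each equality from inflationarity plus monotonicity in one direction and the structural axiom plus idempotence in the other). Your remark that monotonicity of each $\omega$ and of $q*{-}$ follows from join-preservation is the right justification for the only step that needs it.
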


\begin{corollary}  Let $A$ be a $Q$-module-algebra of type $\Omega$ and  $j$ be a $Q$-MA-nucleus on $A$.  
 Define $A_j = \{a \in A \mid j(a) = a\}$. Then $A_j = j(A)$ and, 
 moreover, $A_j$ is a $Q$-module-algebra of type $\Omega$  with the
following structure:
\begin{enumerate}[{\rm (i)}]
\item $\bigvee_{A_j} S= j(\bigvee S)$ for every $S \subseteq A_j$;
\item  $\omega_{A_j}(a_1, \ldots, a_n) = j(\omega(a_1, \ldots, a_n))$ for every 
$n \in \N$, $\omega \in \Omega_n$,  $a_1,\dots,a_n\in A_j$;
\item $q *_{A_j} a = j(q * a)$ for every $a \in A_j$ and $q \in Q$.
\end{enumerate}
\end{corollary}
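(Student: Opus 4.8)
The plan is to treat $j$ as a closure operator whose fixpoints form $A_j$, and to reduce every computation in $A_j$ to the corresponding one in $A$ by means of the four identities of the preceding Proposition. I would begin with the set equality $A_j = j(A)$: if $a \in A_j$ then $a = j(a) \in j(A)$, and conversely if $a = j(b)$ then $j(a) = (j \circ j)(b) = j(b) = a$ by item~(i) of the Proposition, so $a \in A_j$. In particular every element of the form $j(x)$ lies in $A_j$, which is exactly what makes the three candidate operations $j(\bigvee S)$, $j(\omega(a_1, \dots, a_n))$ and $j(q * a)$ well defined as operations on $A_j$.

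Next I would verify that $\bigvee_{A_j} S = j(\bigvee S)$ is the join of $S$ in the order that $A_j$ inherits from $A$. It is an upper bound, since $s \leq \bigvee S \leq j(\bigvee S)$ for each $s \in S$, using monotonicity and the inequality $a \leq j(a)$ from Definition~\ref{qnucdef}; and it is the least one in $A_j$, because any upper bound $u = j(u) \in A_j$ satisfies $\bigvee S \leq u$ and hence $j(\bigvee S) \leq j(u) = u$. Thus $(A_j, \bigvee_{A_j})$ is a sup-lattice.

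For the module structure I would expand both sides of each axiom into applications of $j$ and then collapse the nested occurrences. The join axiom $(\bigvee T) *_{A_j} a = \bigvee_{A_j}\{t *_{A_j} a \mid t \in T\}$ reduces, via the module law in $A$, to the identity $j(\bigvee_t (t * a)) = j(\bigvee_t j(t * a))$, which is item~(ii) of the Proposition. Associativity $p *_{A_j}(q *_{A_j} a) = (p \cdot q) *_{A_j} a$ reduces to $j(p * j(q * a)) = j(p * (q * a))$, which is item~(iv), followed by $p*(q*a)=(p\cdot q)*a$ in $A$. Unitality $1 *_{A_j} a = j(1 * a) = j(a) = a$ for $a \in A_j$ is immediate.

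The main obstacle is the componentwise $Q$-module-homomorphism property of each $\omega_{A_j}$. The key move is to pull the inner $j$ out of a single argument: for fixpoints $a_1, \dots, a_n \in A_j$ one has $j(\omega(a_1, \dots, j(c), \dots, a_n)) = j(\omega(a_1, \dots, c, \dots, a_n))$, which follows from item~(iii) of the Proposition together with $j(a_i) = a_i$ and idempotency. Applying this with $c = \bigvee S$ (resp. $c = q * b$) in the distinguished component lets me replace $j(\bigvee S)$ by $\bigvee S$ (resp. $j(q*b)$ by $q*b$), after which $\omega$ being a module homomorphism in $A$ converts the argument into a join (resp. a scalar multiple), and finally the outer $j$'s are reabsorbed using item~(ii) (join component) and item~(iv) (scalar component). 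With this property secured, $A_j$ meets every clause of the definition of a $Q$-module-algebra, completing the proof.
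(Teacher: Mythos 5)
Your proof is correct and follows exactly the standard nucleus-fixpoint argument that the paper itself invokes: the paper gives no computations of its own here, merely citing Solovyov \cite{solovyov-qa} and \v{S}lesinger \cite{slesinger} for ``the same straightforward computations,'' and your derivation (idempotency for $A_j=j(A)$, the upper-bound/least-upper-bound check for $\bigvee_{A_j}$, and the use of items (ii)--(iv) of the Proposition to pull inner occurrences of $j$ in and out) is precisely what those references do. No gaps; this is the intended proof written out in full.
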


\section{Representation theorem for $Q$-sup-algebras}

Now  we are ready to show a representation theorem for $Q$-sup-algebras. 

Given a  $Q$-module $A$, every $a \in A$ gives rise to the 
adjunction  (in ordered sets)
\begin{tikzcd}
A \arrow[ shift left]{r}{a \twoheadrightarrow \cdot} &  Q \arrow[ shift left]{l}{\cdot * a} 
\end{tikzcd}
 where
$a  \twoheadrightarrow b =  \bigvee\{q \in Q \mid q * a \leq b\}$. 

If moreover $A$ is a $Q$-sup-algebra we use  this adjunction  to 
construct a nucleus on the (free) $Q$-sup-algebra $Q^{V A}$.

\begin{proposition}  Let A be a $Q$-sup-algebra of type $\Omega$. 
There exists a $Q$-MA-nucleus $j_A$ on $Q^{VA}$ 
defined by $j_A(\alpha)(a) = a  \twoheadrightarrow \epsilon_A(\alpha)$.
\end{proposition}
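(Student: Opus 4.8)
The plan is to recognise $j_A$ as the closure operator induced by the counit $\epsilon_A\colon Q^{VA}\to A$ of the adjunction $F_Q\dashv V$ from the preceding Remark. Since $\epsilon_A$ is a morphism of $\catQSupOAlg$, it is in particular $Q$-join-preserving, hence a sup-lattice homomorphism, and so it has a right adjoint $\epsilon_A^{*}\colon A\to Q^{VA}$ in the order-theoretic sense, characterised by $\epsilon_A(\alpha)\le b\iff\alpha\le\epsilon_A^{*}(b)$. My first step would be to compute this right adjoint. Using $\epsilon_A(\alpha)=\bigvee_{a\in A}\alpha(a)*a$, the inequality $\epsilon_A(\alpha)\le b$ holds iff $\alpha(a)*a\le b$ for every $a$, which by the residuation adjunction $q*a\le b\iff q\le a\twoheadrightarrow b$ is equivalent to $\alpha(a)\le(a\twoheadrightarrow b)$ for every $a$. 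This forces $\epsilon_A^{*}(b)(a)=a\twoheadrightarrow b$, and therefore
$$(\epsilon_A^{*}\circ\epsilon_A)(\alpha)(a)=a\twoheadrightarrow\epsilon_A(\alpha)=j_A(\alpha)(a),$$
so that $j_A=\epsilon_A^{*}\circ\epsilon_A$. Everything else should follow from this identification together with the adjunction inequalities $\id\le\epsilon_A^{*}\circ\epsilon_A$ and $\epsilon_A\circ\epsilon_A^{*}\le\id$.

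For conditions (i)--(iii) of Definition \ref{qnucdef} I would simply invoke the standard theory of closure operators attached to an order adjunction: monotonicity (i) follows from monotonicity of $\epsilon_A$ and $\epsilon_A^{*}$, the inflationary law (ii) is the unit $\id\le\epsilon_A^{*}\circ\epsilon_A$, and idempotency (hence in particular (iii)) follows from the triangle identity $\epsilon_A\circ\epsilon_A^{*}\circ\epsilon_A=\epsilon_A$, which yields $j_A\circ j_A=j_A$.

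The two algebraic conditions (iv) and (v) are where the homomorphism property of $\epsilon_A$ enters, and I would handle both by transposing across the adjunction. For (iv), the desired inequality is equivalent, by $\epsilon_A\dashv\epsilon_A^{*}$, to $\epsilon_A(\omega(j_A(\alpha_1),\dots,j_A(\alpha_n)))\le\epsilon_A(\omega(\alpha_1,\dots,\alpha_n))$; since $\epsilon_A$ preserves $\omega$ (it is a $Q$-module-algebra homomorphism via Solovyov's isomorphism) the left-hand side becomes $\omega_A(\epsilon_A(j_A(\alpha_1)),\dots,\epsilon_A(j_A(\alpha_n)))$, each factor satisfies $\epsilon_A(j_A(\alpha_i))=\epsilon_A\epsilon_A^{*}\epsilon_A(\alpha_i)\le\epsilon_A(\alpha_i)$ by the counit inequality, and monotonicity of $\omega_A$ closes the argument. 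Condition (v) is the exact analogue using preservation of the module action: transposing reduces it to $\epsilon_A(q*j_A(\alpha))\le\epsilon_A(q*\alpha)$, and then $\epsilon_A(q*j_A(\alpha))=q*\epsilon_A(j_A(\alpha))\le q*\epsilon_A(\alpha)=\epsilon_A(q*\alpha)$.

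I expect the only genuinely delicate step to be the explicit identification $j_A=\epsilon_A^{*}\circ\epsilon_A$ via the computation of the right adjoint; once that reformulation is secured, the five axioms are forced by the adjunction inequalities and the fact that $\epsilon_A$ is a $Q$-module-algebra homomorphism. In carrying this out I would also make explicit two background facts used implicitly: that $\epsilon_A$ is indeed the counit morphism of $F_Q\dashv V$, so that it preserves all the structure, and that $1*a=a$ holds in the $Q$-module $A$, which underlies both the residuation adjunction and the formula for $\epsilon_A$.
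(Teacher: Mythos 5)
Your proof is correct, and it is organized differently from the paper's. The paper verifies condition (iv) by a pointwise computation: it unfolds $\omega_{Q^{VA}}(j_A(\alpha_1),\dots,j_A(\alpha_n))(a)$ via the explicit formula $\sup_{\omega_A(a_1,\dots,a_n)=a} j_A(\alpha_1)(a_1)\cdots j_A(\alpha_n)(a_n)$, multiplies by $*\,a$, pushes the scalars inside $\omega_A$ using the module-homomorphism property of each component, applies the residuation counit $(a_i\twoheadrightarrow\epsilon_A(\alpha_i))*a_i\le\epsilon_A(\alpha_i)$, invokes the homomorphism property of $\epsilon_A$, and then residuates back; conditions (i)--(iii) and (v) are simply deferred to Solovyov's Proposition~5.1. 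You instead identify $j_A=\epsilon_A^{*}\circ\epsilon_A$ once and for all, after which (i)--(iii) are the standard closure-operator facts for an order adjunction and (iv), (v) both reduce by a single transposition to $\epsilon_A(\omega(j_A(\alpha_1),\dots))\le\epsilon_A(\omega(\alpha_1,\dots))$ and $\epsilon_A(q*j_A(\alpha))\le\epsilon_A(q*\alpha)$, which follow from $\epsilon_A$ being a $Q$-module-algebra homomorphism together with $\epsilon_A\circ\epsilon_A^{*}\le\id$. Mathematically the two arguments rest on the same two ingredients (residuation and the homomorphism property of $\epsilon_A$): your aggregated counit inequality $\epsilon_A(\epsilon_A^{*}(b))\le b$ is exactly the family of pointwise inequalities $(a\twoheadrightarrow b)*a\le b$ that the paper uses. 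What your version buys is uniformity --- all five axioms fall out of one identification, with no appeal to the explicit sup-formula for $\omega_{Q^{VA}}$ and no external citation --- at the cost of having to verify carefully that $\epsilon_A^{*}(b)(a)=a\twoheadrightarrow b$ and that the pointwise order on $Q^{VA}$ is the relevant one; you do both correctly. One small point worth making explicit if you write this up: the paper never needs the full computation of $\epsilon_A^{*}$, whereas your argument depends on it, so that computation (which you carry out via $\bigvee_a\alpha(a)*a\le b\iff\forall a\,(\alpha(a)\le a\twoheadrightarrow b)$) is the load-bearing step and should not be compressed.
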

\begin{proof}  It is enough to check the conditions of Definition \ref{qnucdef}.
Conditions (i), (ii), (iii) and (v) follow by the same consideration as in 
\cite[Proposition 5.1]{solovyov-qa}. Let us check  condition (iv).

Let  $n \in \N$, $\omega \in \Omega_n$, 
$a\in A$,  $\alpha_1,\dots,\alpha_n\in Q^{VA}$. We want that  
$\omega(j_A(\alpha_1), \ldots, j_A(\alpha_n))(a) 
\leq  j_A(\omega(\alpha_1, \ldots, \alpha_n))(a)=%
a  \twoheadrightarrow \epsilon_A(\omega(\alpha_1, \ldots, \alpha_n))$. 
We compute 
$$\begin{array}{l}
\omega(j_A(\alpha_1), \ldots, j_A(\alpha_n))(a) * a =\\
\sup_{\omega_A(a_1,\dots,a_n) = a} j_A(\alpha_1)(a_1)\cdot \ldots\cdot j_A(\alpha_n)(a_n)* a=\\
\sup_{\omega_A(a_1,\dots,a_n) = a} \omega_A((a_1\twoheadrightarrow \epsilon_A(\alpha_1)) *a_1,\dots)\leq \\
\omega_A(\epsilon_A(\alpha_1), \dots,  \epsilon_A(\alpha_n))\leq 
\epsilon_A(\omega(\alpha_1, \dots,  \alpha_n)).
\end{array}
$$
The last inequality is valid because $\epsilon_A$ is a homomorphism of $Q$-sup-algebras.
\end{proof}

As in \cite{solovyov-qa} we introduce, for any 
element $a\in A$ of a $Q$-sup-algebra $A$, a map 
$\beta_a\in Q^{VA}$ defined by $\beta_a(x)=x\twoheadrightarrow a$. 
By the same arguments as in \cite[Lemma 5.2]{solovyov-qa} we obtain the following. 

\begin{lemma}  Let $A$ be a $Q$-sup-algebra of type $\Omega$. For every $a\in A$:
$$ \text{(i)\ } \epsilon_A(\beta_a)=a, \qquad \text{(ii)\ }\beta_a\in (Q^{VA})_{j_a}.$$
\end{lemma}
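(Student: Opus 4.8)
The plan is to prove (i) directly and then obtain (ii) as an immediate consequence. For (i), I would unfold the counit of the free--forgetful adjunction, writing $\epsilon_A(\beta_a) = \bigvee_{x \in A} \beta_a(x) * x = \bigvee_{x \in A} (x \twoheadrightarrow a) * x$, and then establish the two inequalities separately. The bound $\epsilon_A(\beta_a) \le a$ is exactly the counit of the pointwise adjunction $\cdot * x \dashv (x \twoheadrightarrow \cdot)$: since $x \twoheadrightarrow a \le x \twoheadrightarrow a$, the adjunction yields $(x \twoheadrightarrow a) * x \le a$ for every $x \in A$, and since this bound holds uniformly in $x$, taking the join over $x$ preserves it.

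For the reverse inequality $a \le \epsilon_A(\beta_a)$, I would isolate the single summand indexed by $x = a$. Because $1$ is the multiplicative unit of $Q$ we have $1 * a = a \le a$, so $1$ belongs to the set $\{q \in Q \mid q * a \le a\}$ whose join defines $a \twoheadrightarrow a$; hence $a \twoheadrightarrow a \ge 1$, and by monotonicity of $\cdot * a$ we get $(a \twoheadrightarrow a) * a \ge 1 * a = a$. As this term appears in the join $\bigvee_{x \in A} (x \twoheadrightarrow a) * x$, it follows that $\epsilon_A(\beta_a) \ge a$. Combining the two inequalities gives $\epsilon_A(\beta_a) = a$, which is (i).

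Part (ii) then requires essentially no further computation. By the definition of the nucleus from the preceding proposition, $j_A(\beta_a)(x) = x \twoheadrightarrow \epsilon_A(\beta_a)$ for every $x \in A$, and substituting the value $\epsilon_A(\beta_a) = a$ from (i) gives $j_A(\beta_a)(x) = x \twoheadrightarrow a = \beta_a(x)$. Thus $j_A(\beta_a) = \beta_a$, so $\beta_a$ is a fixed point of $j_A$ and therefore lies in the fixed-point set $(Q^{VA})_{j_A}$ described in the Corollary of Section~\ref{nucleiprop}.

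I do not anticipate any genuine obstacle. The only point needing care is keeping the two adjunctions apart --- the outer adjunction $F_Q \dashv V$ through which $\epsilon_A$ is defined, and the family of Galois adjunctions $\cdot * x \dashv (x \twoheadrightarrow \cdot)$ internal to $Q$ --- and checking that the counit inequality holds uniformly in $x$ so that it survives passage to the supremum. Beyond that, the argument is a direct unwinding of definitions and proceeds exactly as in \cite[Lemma 5.2]{solovyov-qa}.
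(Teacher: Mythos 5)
Your proof is correct and is essentially the argument the paper relies on: the paper proves this lemma only by citing \cite[Lemma 5.2]{solovyov-qa}, and your unwinding of $\epsilon_A(\beta_a)=\bigvee_x (x\twoheadrightarrow a)*x$ via the counit inequality $(x\twoheadrightarrow a)*x\le a$ plus the unit law $1*a=a$ for the lower bound, followed by deducing (ii) from (i) through $j_A(\beta_a)(x)=x\twoheadrightarrow\epsilon_A(\beta_a)=x\twoheadrightarrow a$, is exactly that argument. No gaps.
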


Using the above lemma, we can conclude with our main theorem. 

\begin{theorem} \label{reprtheor} {\normalfont\bfseries (Representation Theorem).} Let 
$A$ be a  $Q$-sup-algebra of type $\Omega$. The map
$\rho_A\colon A\to (Q^{V A})_{j_A}$ defined by 
$\rho_{A}(a) = \beta_a$ is an isomorphism of $Q$-sup-algebras.
\end{theorem}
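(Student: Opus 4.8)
The plan is to realise $\rho_A$ as the two-sided inverse of the corestriction of the counit $\epsilon_A$ to the fixpoint algebra, and then to transfer to $\rho_A$ the fact, already exploited in the previous proposition, that $\epsilon_A$ is a homomorphism of $Q$-sup-algebras. Throughout I pass through Solovyov's isomorphism and argue in $\catQModOAlg$, where the structure of the fixpoint algebra $B := (Q^{VA})_{j_A}$ is given explicitly by the Corollary of Section \ref{nucleiprop}.

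The first step is to record the identity $\epsilon_A \circ j_A = \epsilon_A$. For any $\alpha \in Q^{VA}$ the definition of $j_A$ gives $j_A(\alpha)(x) = x \twoheadrightarrow \epsilon_A(\alpha) = \beta_{\epsilon_A(\alpha)}(x)$ for every $x$, so $j_A(\alpha) = \beta_{\epsilon_A(\alpha)}$, and then Lemma (i) yields $\epsilon_A(j_A(\alpha)) = \epsilon_A(\beta_{\epsilon_A(\alpha)}) = \epsilon_A(\alpha)$. Writing $\overline{\epsilon}_A$ for the restriction of $\epsilon_A$ to $B$, I would next establish the bijectivity of $\rho_A$ cheaply: by Lemma (ii) the map $\rho_A$ indeed lands in $B$; Lemma (i) gives $\overline{\epsilon}_A(\rho_A(a)) = \epsilon_A(\beta_a) = a$; and for a fixpoint $\alpha = j_A(\alpha)$ the computation above gives $\rho_A(\overline{\epsilon}_A(\alpha)) = \beta_{\epsilon_A(\alpha)} = j_A(\alpha) = \alpha$. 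Thus $\rho_A$ and $\overline{\epsilon}_A$ are mutually inverse bijections, and surjectivity of $\rho_A$ costs nothing beyond the fixpoint condition.

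It then suffices to verify that $\overline{\epsilon}_A$ is a homomorphism: since $\catQSupOAlg$ is a monadic construct, a bijective homomorphism is an isomorphism, so $\overline{\epsilon}_A$ is an isomorphism and its inverse $\rho_A$ is the desired isomorphism of $Q$-sup-algebras. Here I would use the structure on $B$ from the Corollary together with $\epsilon_A \circ j_A = \epsilon_A$ and the preservation properties of $\epsilon_A$, checking the three clauses $\overline{\epsilon}_A(\bigvee_{B} S) = \epsilon_A(j_A(\bigvee S)) = \epsilon_A(\bigvee S) = \bigvee \epsilon_A(S)$, then $\overline{\epsilon}_A(\omega_{B}(\alpha_1,\dots,\alpha_n)) = \epsilon_A(j_A(\omega(\alpha_1,\dots,\alpha_n))) = \omega_A(\epsilon_A(\alpha_1),\dots,\epsilon_A(\alpha_n))$, and $\overline{\epsilon}_A(q *_{B} \alpha) = \epsilon_A(j_A(q * \alpha)) = q * \epsilon_A(\alpha)$. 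In each line the recorded identity strips off the outer $j_A$, after which the homomorphism property of $\epsilon_A$ closes the computation.

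The only genuinely delicate point is the identity $\epsilon_A \circ j_A = \epsilon_A$: it is exactly what makes the nucleus quotient of the Corollary collapse onto $A$, identifying the nucleus congruence of $j_A$ with the kernel congruence of $\epsilon_A$. Once it is in hand, everything else is formal bookkeeping, and I expect no further obstacle.
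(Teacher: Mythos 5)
Your proof is correct, and it takes a genuinely different route from the paper's. The paper argues about $\rho_A$ head-on: it cites Solovyov's Theorem~5.3 for the fact that $\rho_A$ is a bijective $Q$-module homomorphism, and then verifies preservation of each $\omega\in\Omega_n$ by evaluating $\omega_{(Q^{VA})_{j_A}}(\rho_A(a_1),\dots,\rho_A(a_n))$ at an arbitrary point $c$ and unwinding the definition of $j_A$, via the chain
$j_A(\omega(\rho_A(a_1),\dots,\rho_A(a_n)))(c)=c\twoheadrightarrow\epsilon_A(\omega(\rho_A(a_1),\dots,\rho_A(a_n)))=c\twoheadrightarrow\omega_A(a_1,\dots,a_n)=\rho_A(\omega_A(a_1,\dots,a_n))(c)$,
using that $\epsilon_A$ is a homomorphism and that $\epsilon_A(\beta_a)=a$. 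You instead isolate the identity $\epsilon_A\circ j_A=\epsilon_A$ (equivalently $j_A=\rho_A\circ\epsilon_A$), exhibit $\rho_A$ and the corestricted counit $\overline{\epsilon}_A$ as mutually inverse functions, check that $\overline{\epsilon}_A$ is a homomorphism clause by clause against the Corollary's description of the fixpoint structure, and finish with the standard fact that bijective morphisms in a monadic construct are isomorphisms. Both arguments rest on the same ingredients (Lemma (i)--(ii), the homomorphism property of $\epsilon_A$, and the induced structure on $(Q^{VA})_{j_A}$), but yours buys a self-contained treatment of the sup-lattice and module clauses that the paper delegates to ``mimicking'' Solovyov, and it makes explicit the conceptual point that the nucleus congruence of $j_A$ is exactly the kernel of the counit. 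What it costs is the appeal to monadicity to invert the bijective homomorphism (which could also be done by hand for these complete-lattice-based algebras), whereas the paper's pointwise computation establishes the homomorphism property of $\rho_A$ directly without that detour.
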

\begin{proof}
By mimicking the proof of  \cite[Theorem 5.3]{solovyov-qa} we get that 
$\rho_{A}$ is a bijective $Q$-module homomorphism, i.e., it is a bijective homomorphism 
of $Q$-sup-lattices. Let us prove that 
$\omega_{(Q^{V A})_{j_A}}(\rho_{A}(a_1), \dots, \rho_{A}(a_n))=%
\rho_{A}(\omega_A(a_1, \dots,$ $ a_n))$ 
for all $n \in \N$, $\omega \in \Omega_n$, and $a_1,\dots,a_n\in A$. We compute 
$$\begin{array}{l}
\omega_{(Q^{V A})_{j_A}}(\rho_{A}(a_1), \dots, \rho_{A}(a_n))(c)=\\
j_A(\omega(\rho_{A}(a_1), \dots, \rho_{A}(a_n)))(c)=\\
c \twoheadrightarrow \epsilon_A(\omega(\rho_{A}(a_1), \dots, \rho_{A}(a_n)))=\\
c \twoheadrightarrow \omega_A(\epsilon_A(\rho_{A}(a_1)), \dots, \epsilon_A(\rho_{A}(a_n))=\\
c \twoheadrightarrow \omega_A(a_1, \dots, a_n)=
\rho_{A}(\omega_A(a_1, \dots, a_n))(c).
\end{array}
$$
Hence $\rho_{A}$ is an isomorphism of $Q$-sup-algebras.
\end{proof}

\begin{remark}
Note that in case of $Q={\mathbf 2}$ our Theorem follows from \cite[Theorem 2.2.30]{resende}. 
As noted by Solovyov in \cite{solovyov-qa} $\beta_a$ corresponds to the lower set $\da a$ and 
$\epsilon_A\colon {\mathbf 2}^{A}\to A$ is the join operation on $A$.
\end{remark}

\section*{Acknowledgements}  
This is a pre-print of an article published as \newline 
J. Paseka, R.~{\v S}lesinger, A representation theorem for quantale valued sup-algebras, 
in: Proceedings of the 48th IEEE International Symposium on Multiple-Valued Logic, Springer, (2018), 91--96, 
doi: 10.1109/ISMVL.2018.00024.
The final authenticated version of the article is available online at: 
https://ieeexplore.ieee.org/abstract/\-document/8416927.


Both authors acknowledge the support by the bilateral project 
New Perspectives on Residuated Posets  financed by  
Austrian Science Fund (FWF): project I 1923-N25, 
and the Czech Science Foundation (GA\v CR): project 15-34697L.


\begin{thebibliography}{99}

\bibitem{abramsky-vickers} S. Abramsky and S. Vickers,
Quantales, observational logic and process semantics,
Mathematical Structures in Computer Science, 3, 161--227,
1993.
  
\bibitem{adamek} J. Ad\'amek, H. Herrlich, and G.E. Strecker, 
Abstract and Concrete Categories: The Joy
of Cats, John Wiley \& Sons, 1990.


\bibitem{kruml} D. Kruml and J. Paseka, Algebraic and Categorical Aspects of Quantales, 
Handbook of Algebra (M. Hazewinkel, ed.), vol. 5, Amsterdam: Elsevier, 2008, pp. 323--362.

\bibitem{paseka} J. Paseka, Projective sup-algebras: A General View, Topology and its Applications
155, 308--317,  2008. 

\bibitem{resende} P.~Resende, 
 Tropological Systems and Observational Logic in Concurrency and Specification, 
 {PhD thesis, IST, Universidade T\'ecnica de Lisboa}, 1998.
 
 \bibitem{rosenthal1} K.~J.~Rosenthal,
{Quantales and their applications}, Pitman Research Notes in
Mathematics Series 234, Longman Scientific \& Technical, 1990.

\bibitem{slesinger2} R. {\v S}lesinger,  Quantale-valued sup-algebras,
  {Iranian Journal of Fuzzy Systems}, in press, doi:10.22111/IJFS.2017.3440.

\bibitem{slesinger} R. {\v S}lesinger,  Triads in Ordered Sets, PhD thesis, Masaryk University,
  2016.

\bibitem{solovyov-rep}  S.~A. Solovyov, A representation theorem for quantale algebras, 
In Proceedings of the Klagenfurt Workshop 2007 on General Algebra, 
volume 18, pages 189--197, 2008.

\bibitem{solovyov-qa} S.~A. Solovyov,
Quantale algebras as a generalization of lattice-valued frames,
{Hacettepe Journal of Mathematics and Statistic}, 45, 781--809,
  2016.
 
 \bibitem{stubbe-tcc}
I.~Stubbe,
 Categorical structures enriched in a quantaloid: tensored and cotensored categories,
 {Theory and Applications of Categories}, 16, 283--306,
  2006. 
  
\bibitem{zhang-laan}
X.~Zhang and V.~Laan,
 Quotients and subalgebras of sup-algebras,
 {Proceedings of the Estonian Academy of Sciences},
  64, 311--322, 2015.

\bibitem{zhang-xie-fan} Q.~Y. Zhang, W.~X. Xie, and L. Fan,
  Fuzzy complete lattices, Fuzzy Sets and Systems, 160, 2275--2291, 2009.



\end{thebibliography}
\end{document}